\title{A Baseline $T\log^2 T$ Upper Bound for KL-Regularized Prime--Zero Optimal Transport}
\author{Zhejun Yang}
\date{09/sep/2025}
\newtheorem{theorem}{Theorem}[section]
\newtheorem{lemma}[theorem]{Lemma}
\newtheorem{proposition}[theorem]{Proposition}
\newtheorem{corollary}[theorem]{Corollary}
\newtheorem{remark}[theorem]{Remark}
\numberwithin{equation}{section}
\begin{document}
\maketitle

\begin{abstract}
We prove, unconditionally, the baseline upper bound
\[
\mathsf{OT}_\eta(T)\ \ll\ T\log^2 T\qquad (T\to\infty)
\]
for the KL-regularized unbalanced prime--zero optimal transport cost associated with a Beurling--Selberg type kernel $\eta$.
Our route is: normalize the cost to $c_\eta^\circ(\gamma,t)=\eta(t)\,(1-\cos(\gamma t))$; perform Fej\'er synchronization in the zero variable; derive an \emph{integrated} (not pointwise) linear upper bound (R1) after averaging; carry out a zero-frequency calibration; and finally evaluate the resulting difference in a smoothed explicit formula with $L^1$-control, which yields the $\log^2 T$ factor, while the Paley--Wiener mass of the probe contributes the scale $T$. No use of RH is made.
\end{abstract}

\section{Set-up and smoothing}
Fix $0<\alpha<1$ and set $\Delta=T^{-\alpha}$. Let $\theta_\Delta\in C_c^\infty(\mathbb R)$ be even with $\int\theta_\Delta=1$ and $\|\theta'_\Delta\|_{L^1(\mathbb R)}\asymp \Delta^{-1}$. For $t=\log x$ and zero-height $\gamma$, define
\begin{align}
\nu_T(dt) &:= \Big(\sum_{p^k\le e^T}\frac{1}{k}\,\delta_{\log p^k}\Big)*\theta_\Delta(t)\,dt,\\
\mu_\Omega(d\gamma) &:= \Big(\sum_{\rho=\beta+i\gamma}\mathbbm 1_{|\gamma|\le\Omega}\,\delta_\gamma\Big)*\theta_\Delta(\gamma)\,d\gamma,\qquad \Omega=\kappa T\ \ (\kappa>0\text{ fixed}).
\end{align}
The corrected main terms are $m(t)=e^t/t=(\mathrm{li}(e^t))'$ and $n(\gamma)=(2\pi)^{-1}\log(\gamma/2\pi)+O(\gamma^{-1})$ for $\gamma\ge 2$. Set
\begin{equation}
a(t):=\frac{d\nu_T}{dt}(t)-m(t),\qquad b(\gamma):=\frac{d\mu_\Omega}{d\gamma}(\gamma)-n(\gamma).
\end{equation}

\begin{lemma}[Absolute integrability]\label{lem:absint}
Unconditionally, as $T\to\infty$,
\begin{equation}\label{eq:absint}
\|a\|_{L^1([0,T])}\ \ll\ T^\alpha \frac{e^T}{T}\,e^{-c\sqrt{T}},
\qquad
\int_{-\Omega}^{\Omega}\! |b(\gamma)|\,d\gamma\ \ll\ T^\alpha\log T,
\end{equation}
for some absolute $c>0$. Hence all integrals below are absolutely convergent and Fubini--Tonelli applies.
\end{lemma}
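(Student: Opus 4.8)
The plan is to reduce both halves of \eqref{eq:absint} to the classical, \emph{unconditional} error terms --- the prime number theorem with the de la Vall\'ee Poussin (a fortiori Vinogradov--Korobov) remainder on the prime side, and the Riemann--von Mangoldt counting formula on the zero side --- with the convolution against $\theta_\Delta$ absorbed by one integration by parts together with Young's convolution inequality. No information on the location of the zeros is used, in keeping with the unconditionality claim.

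\textbf{Prime side.} Work in the coordinate $u=\log x$, put $\Pi(x)=\sum_{p^k\le x}k^{-1}$, and note that $d\nu_T/dt=\big(\mathbbm 1_{u\le T}\,d\Pi(e^u)\big)*\theta_\Delta$; since $m(t)\,dt=d\big(\mathrm{li}(e^t)\big)$, the comparison is to the likewise truncated-and-smoothed main term $\big(\mathbbm 1_{u\le T}\,m(u)\,du\big)*\theta_\Delta$. The prime number theorem $\Pi(x)=\mathrm{li}(x)+O\!\big(x\exp(-c\sqrt{\log x})\big)$ --- with the higher prime powers $\sum_{n\ge 2}n^{-1}\pi(x^{1/n})=O(\sqrt x/\log x)$ absorbed --- says precisely that the truncated fluctuation $R_{\le T}(u):=\Pi(e^{\min(u,T)})-\mathrm{li}(e^{\min(u,T)})$ satisfies $|R_{\le T}(u)|\ll e^{\min(u,T)}e^{-c\sqrt{\min(u,T)}}$. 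Integrating by parts (boundary terms vanish since $\theta_\Delta$ is compactly supported) shifts the $d$ onto the test function, $a(t)=\int R_{\le T}(u)\,\theta'_\Delta(t-u)\,du$ up to the smoothing slack of $m$, and by Tonelli with $\|\theta'_\Delta\|_{L^1}\asymp\Delta^{-1}=T^{\alpha}$,
\[
\|a\|_{L^1([0,T])}\ \ll\ T^{\alpha}\!\!\int_{-c\Delta}^{\,T+c\Delta}\!\!|R_{\le T}(u)|\,du\ \ll\ T^{\alpha}\!\!\int_{0}^{T}\! e^{u}e^{-c\sqrt u}\,du\ \ll\ T^{\alpha}e^{T}e^{-c\sqrt T}.
\]
The last step uses that $e^{u}e^{-c\sqrt u}$ is eventually increasing and grows geometrically, so the integral concentrates within $O(1)$ of $u=T$; absorbing the spare polynomial factor into the exponential yields the first bound of \eqref{eq:absint} with a slightly smaller $c$.

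\textbf{Zero side.} The argument has the identical shape, with the prime number theorem replaced by $N(U):=\#\{\rho=\beta+i\gamma:0<\gamma\le U\}=\tfrac{U}{2\pi}\log\tfrac{U}{2\pi}-\tfrac{U}{2\pi}+O(\log U)$ and its reflection under $\gamma\mapsto-\gamma$ (zeros occur in conjugate pairs, so the zero measure and $n$ are even). Comparing $d\mu_\Omega/d\gamma$ to the truncated-and-smoothed density $\big(\mathbbm 1_{|u|\le\Omega}\,n(u)\,du\big)*\theta_\Delta$ and writing $R^{\zeta}_{\le\Omega}$ for the resulting truncated zero-fluctuation, the Riemann--von Mangoldt formula (the $O(\gamma^{-1})$ tail of $n$ integrating to $O(\log U)$) gives the uniform bound $|R^{\zeta}_{\le\Omega}(u)|\ll\log\big(2+\min(|u|,\Omega)\big)\ll\log T$. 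The same integration by parts gives $b(\gamma)=\int R^{\zeta}_{\le\Omega}(u)\,\theta'_\Delta(\gamma-u)\,du$ up to the smoothing slack of $n$, and a direct estimate using $\|\theta'_\Delta\|_{L^1}\asymp T^{\alpha}$ produces the second bound of \eqref{eq:absint}.

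\textbf{Main obstacle.} The analytic inputs above are textbook; the genuinely delicate step is the edge bookkeeping forced by the hard cut-offs at $e^{T}$ and at height $\Omega$. One must (i) compare the smoothed truncated measures to the correspondingly \emph{truncated} smoothed main terms $m\,\mathbbm 1_{t\le T}$ and $n\,\mathbbm 1_{|\gamma|\le\Omega}$, not to $m$ and $n$ on the whole line, so that $R_{\le T}$ and $R^{\zeta}_{\le\Omega}$ stay small up to the endpoints --- the residual smoothing slack $m*\theta_\Delta-m$ (and its $n$-analogue) is then lower order, since the first moment of the even kernel $\theta_\Delta$ vanishes and the slack is $O(\Delta^{2}\sup|m''|)$, $O(\Delta^{2}\sup|n''|)$ rather than $O(\Delta\sup|m'|)$, $O(\Delta\sup|n'|)$, integrating to a negligible quantity; and (ii) check that the $\theta_\Delta$-blur across each endpoint creates only a boundary layer of width $O(\Delta)$, on which the integrand is $O\big(\Delta^{-1}\cdot(\text{endpoint scale})\big)$ and hence of total mass $O(\text{endpoint scale})=O(e^{T}e^{-c\sqrt T})$ resp. $O(\log T)$, which is absorbed. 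Once \eqref{eq:absint} is established, $a\in L^1([0,T])$, $b\in L^1([-\Omega,\Omega])$, and the absolute convergence of all subsequent integrals and the Fubini--Tonelli interchanges follow at once.
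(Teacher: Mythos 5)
Your route --- move the derivative onto $\theta_\Delta$ by parts, then feed in the unconditional de la Vall\'ee Poussin and Riemann--von Mangoldt remainders with $\|\theta'_\Delta\|_{L^1}\asymp T^\alpha$ --- is exactly the paper's route, and your prime-side bookkeeping of the truncation at $e^T$ is more explicit than the paper's. But the zero-side conclusion does not follow from what you wrote, and in fact cannot. The step ``a direct estimate using $\|\theta'_\Delta\|_{L^1}\asymp T^\alpha$ produces the second bound'' yields only the \emph{pointwise} bound $|b(\gamma)|\le\|R^{\zeta}_{\le\Omega}\|_\infty\|\theta'_\Delta\|_{L^1}\ll T^\alpha\log T$; integrating this over $[-\Omega,\Omega]$ costs an extra factor $2\Omega\asymp T$, and the Young's-inequality alternative requires $\|R^{\zeta}_{\le\Omega}\|_{L^1([-\Omega,\Omega])}$, which is nowhere near $O(\log T)$ --- there is no analogue of the prime-side geometric concentration, because $\log\gamma$ is flat. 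Worse, the asserted inequality $\int_{-\Omega}^{\Omega}|b|\ll T^\alpha\log T$ is false: $d\mu_\Omega/d\gamma$ is a sum of $N\asymp\Omega\log\Omega\asymp T\log T$ unit-mass bumps of width $O(\Delta)$, supported on a set of measure $O(N\Delta)=O(T^{1-\alpha}\log T)=o(\Omega)$; off that set $b=-n$, so
\[
\int_{-\Omega}^{\Omega}|b(\gamma)|\,d\gamma\ \ge\ (1-o(1))\int_{\sqrt{\Omega}}^{\Omega} n(\gamma)\,d\gamma\ \gg\ T\log T .
\]
(The paper's own one-line estimate $\int|b|\ll\Delta^{-1}\|F\|_\infty$ makes the same $L^\infty$-for-$L^1$ substitution, so you have faithfully reproduced the intended argument; but the gap is real, and the true order of $\int|b|$ is $T\log T$, which would have to be propagated into the statement of the lemma.)

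A second, smaller problem sits in the prime-side item you correctly flag as the delicate one. The smoothing slack $m*\theta_\Delta-m$ is $O(\Delta^2|m''(t)|)=O(T^{-2\alpha}e^t/t)$ pointwise, hence of $L^1([0,T])$ size $\asymp T^{-2\alpha}e^T/T$; the $\theta_\Delta$-blur across the cutoff $t=T$ likewise contributes a boundary layer of mass $\asymp\Delta\,e^T/T=T^{-\alpha}e^T/T$, not $O(e^Te^{-c\sqrt T})$ as you assert (the ``endpoint scale'' there is $m(T)\cdot\Delta$, not the PNT error). Both terms are only polynomially small relative to $e^T/T$, whereas the right-hand side of the first bound in \eqref{eq:absint} carries the super-polynomially small factor $e^{-c\sqrt T}$; so they are \emph{not} absorbed, and the first bound also holds only after weakening it to something like $T^{-\alpha}e^T/T$. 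That weaker bound (like the corrected $T\log T$ on the zero side) still delivers the absolute convergence and Fubini--Tonelli conclusions for which the lemma is actually used, but as stated neither inequality in \eqref{eq:absint} is established by your argument, and neither is true.
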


\begin{proof}
Write $G(t):=J(e^t)-\mathrm{li}(e^t)$ with $J(x)=\sum_{p^k\le x}1/k$. De la Vall\'ee Poussin gives $G(t)=O(e^t e^{-c\sqrt t}/t)$. Since $a=(G*\theta_\Delta)'$, we get $\|a\|_{L^1}\le \|G\|_\infty\|\theta'_\Delta\|_{L^1}\ll T^\alpha \frac{e^T}{T}e^{-c\sqrt T}$. For $b$, set $F(\Gamma):=\mu_\Omega([0,\Gamma])-N(\Gamma)$. The Riemann--von Mangoldt formula gives $|F(\Gamma)|\ll \log \Gamma$ uniformly in $\Gamma\le \Omega$, while $b=F*\theta'_\Delta$, whence $\int_{-\Omega}^{\Omega}|b|\ll \Delta^{-1}\|F\|_\infty\ll T^\alpha\log T$.
\end{proof}

\paragraph{Beurling--Selberg kernel and probes.}
Let $\eta=\eta_T$ be a standard even Beurling--Selberg majorant equal to $1$ on $[0,T]$ with $\widehat\eta\ge 0$, $L^1$-controlled boundary ringing, and $\|\widehat\eta\|_{L^1}\asymp T$ (see Vaaler; details in the references). Let $F_\Lambda$ be the Fej\'er kernel: even, nonnegative, $\int F_\Lambda=1$, and $\widehat{F_\Lambda}(\tau)=(1-|\tau|/\Lambda)_+$ with $\Lambda\asymp \Delta^{-1}=T^{\alpha}$. For an even probe $f$ with $\widehat f\ge 0$, write
\begin{equation}\label{eq:defs}
S:=\|\widehat f\|_{L^1},\quad g:=f\,\widehat F_\Lambda,\quad \beta_0:=F_\Lambda*\widehat f,\quad h:=\eta\,g,\quad \widehat h=\widehat\eta * \beta_0.
\end{equation}

\begin{lemma}[Two-way Fej\'er identity]\label{lem:two-way-fejer}
For all $\gamma,t\in\mathbb R$,
\begin{equation}\label{eq:two-way}
1-\widehat{F_\Lambda}(t)\cos(\gamma t)\ =\ \int_{\mathbb R}\big(1-\cos(\xi t)\big)\,F_\Lambda(\gamma-\xi)\,d\xi.
\end{equation}
\end{lemma}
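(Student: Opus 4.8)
The plan is to evaluate the right-hand side of \eqref{eq:two-way} directly: split off the constant term, and recognize the remaining cosine integral as the Fourier transform of $F_\Lambda$. First I would record that there is no analytic subtlety here. Since $0\le 1-\cos(\xi t)\le 2$ and $F_\Lambda\in L^1(\mathbb R)$ with $\int F_\Lambda=1$, the integrand in \eqref{eq:two-way} is dominated by $2\,F_\Lambda(\gamma-\xi)$; hence the integral converges absolutely (uniformly on compacta in $(\gamma,t)$), it may be split into two absolutely convergent pieces, and the asserted identity is a genuine pointwise equality of real numbers for every $(\gamma,t)$.

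Next I would write
\[
\int_{\mathbb R}\big(1-\cos(\xi t)\big)\,F_\Lambda(\gamma-\xi)\,d\xi
=\int_{\mathbb R}F_\Lambda(\gamma-\xi)\,d\xi-\int_{\mathbb R}\cos(\xi t)\,F_\Lambda(\gamma-\xi)\,d\xi .
\]
By translation invariance of Lebesgue measure and $\int F_\Lambda=1$, the first integral equals $1$. For the second, substitute $u=\gamma-\xi$ and apply the angle-addition formula $\cos\!\big((\gamma-u)t\big)=\cos(\gamma t)\cos(ut)+\sin(\gamma t)\sin(ut)$; since $F_\Lambda$ is even and $u\mapsto\sin(ut)$ is odd, the $\sin$ contribution integrates to zero, leaving
\[
\int_{\mathbb R}\cos(\xi t)\,F_\Lambda(\gamma-\xi)\,d\xi
=\cos(\gamma t)\int_{\mathbb R}F_\Lambda(u)\cos(ut)\,du
=\cos(\gamma t)\,\widehat{F_\Lambda}(t),
\]
the last step being the definition of $\widehat{F_\Lambda}$ for the even function $F_\Lambda$ (equivalently, $\widehat{F_\Lambda}(t)=(1-|t|/\Lambda)_+$). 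Adding the two contributions yields $1-\widehat{F_\Lambda}(t)\cos(\gamma t)$, which is exactly the left-hand side of \eqref{eq:two-way}.

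Because the computation is this short, I do not expect any real obstacle; the only two points deserving a word of care are (i) the domination bound that licenses splitting the integral, and (ii) the Fourier normalization, namely that with the convention under which $\widehat{F_\Lambda}(\tau)=(1-|\tau|/\Lambda)_+$ one has $\int F_\Lambda(u)\cos(ut)\,du=\widehat{F_\Lambda}(t)$ for even $F_\Lambda$ (the $\sin$-part of the transform vanishes). I would close by noting what the identity buys us downstream: integrating \eqref{eq:two-way} against $\mu_\Omega(d\gamma)$ replaces the oscillatory factor $\widehat{F_\Lambda}(t)\cos(\gamma t)$ by an average of the elementary kernels $1-\cos(\xi t)$ over a $\xi$-window of width $\asymp\Lambda=T^\alpha$ centred at each zero height — this is precisely the ``Fej\'er synchronization in the zero variable'' invoked in the introduction, and it is the form exploited when deriving the integrated linear bound (R1).
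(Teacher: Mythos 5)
Your proof is correct and is essentially the paper's argument: the paper writes the one-line version $\int F_\Lambda(\gamma-\xi)e^{i\xi t}\,d\xi=e^{i\gamma t}\widehat F_\Lambda(t)$, takes real parts, and subtracts from $1$, while you unpack exactly that real-part computation via the angle-addition formula and the evenness of $F_\Lambda$. The extra remarks on absolute convergence and normalization are fine but not needed beyond what the paper already assumes.
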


\begin{proof}
Since $\int F_\Lambda(\gamma-\xi)e^{i\xi t}\,d\xi=e^{i\gamma t}\widehat F_\Lambda(t)$, take real parts and subtract from $1$.
\end{proof}

\begin{lemma}[Cosine pairing]\label{lem:cosine-pairing}
Let $h\in L^1(\mathbb R)$ be even and $\mu$ a finite positive Borel measure. Then
\begin{equation}\label{eq:pair}
\int_{\mathbb R}\!\bigg(\int_{\mathbb R}\! h(t)\cos(\xi t)\,dt\bigg)\,\mu(d\xi)\ =\ \int_{\mathbb R}\widehat h(\xi)\,\mu(d\xi).
\end{equation}
\emph{(Both iterated integrals converge absolutely by the stated hypotheses; Fubini applies.)}
\end{lemma}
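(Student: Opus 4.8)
The plan is to observe that the inner integral on the left-hand side of \eqref{eq:pair} is, for each fixed $\xi$, already equal to $\widehat h(\xi)$, so the asserted identity reduces to a triviality once absolute convergence is in place. First I would expand $\widehat h(\xi)=\int_{\mathbb R}h(t)e^{-i\xi t}\,dt$, which is well defined with $|\widehat h(\xi)|\le\|h\|_{L^1}$ for every $\xi$ since $h\in L^1$. Splitting $e^{-i\xi t}=\cos(\xi t)-i\sin(\xi t)$ and using that $t\mapsto h(t)\sin(\xi t)$ is odd and integrable (product of an even and an odd function), its integral vanishes; hence $\widehat h(\xi)=\int_{\mathbb R}h(t)\cos(\xi t)\,dt$ for all $\xi$. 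Only evenness of $h$ is used here — no reality assumption is needed, since the cancellation of the sine part is purely a parity statement.

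Next I would check that everything in sight converges absolutely. On the product space $(\mathbb R,dt)\times(\mathbb R,\mu)$ the function $(t,\xi)\mapsto h(t)\cos(\xi t)$ is measurable and
\[
\int_{\mathbb R}\!\int_{\mathbb R}|h(t)\cos(\xi t)|\,dt\,\mu(d\xi)\ \le\ \mu(\mathbb R)\,\|h\|_{L^1}\ <\ \infty,
\]
because $\mu$ is finite and $|\cos|\le 1$. By Tonelli the double integral is finite, and by Fubini it may be evaluated in either order; integrating first in $t$ yields the left-hand side of \eqref{eq:pair} with inner integral $\widehat h(\xi)$ by the previous paragraph, and the outer $\xi$-integral is then exactly the right-hand side, which is also absolutely convergent since $\int_{\mathbb R}|\widehat h(\xi)|\,\mu(d\xi)\le\|h\|_{L^1}\mu(\mathbb R)$. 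This closes the argument and justifies the parenthetical remark in the statement.

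There is no genuine obstacle: the lemma is a bookkeeping statement whose only content is the parity identity $\int_{\mathbb R}h(t)\cos(\xi t)\,dt=\widehat h(\xi)$ together with the uniform bound $\|\widehat h\|_\infty\le\|h\|_{L^1}$ and finiteness of $\mu$, which jointly license the applications of Fubini--Tonelli used later (for instance when combining Lemma \ref{lem:two-way-fejer} with this identity, paired against $\mu_\Omega$ or against $F_\Lambda(\gamma-\cdot)\,d\xi$). The single point worth stressing is that $\widehat h$ is here a genuine pointwise-defined function, as $h\in L^1$ makes $\widehat h$ continuous and bounded, so integrating $\widehat h$ against $\mu$ needs no $\mu$-a.e.\ disclaimer.
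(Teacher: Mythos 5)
Your proof is correct and is exactly the argument the paper leaves implicit (the lemma is stated with only the parenthetical Fubini remark and no written proof): the parity identity $\int h(t)\cos(\xi t)\,dt=\widehat h(\xi)$ for even $h\in L^1$, plus the trivial bound $|h(t)\cos(\xi t)|\le|h(t)|$ and finiteness of $\mu$ to justify absolute convergence. Nothing further is needed.
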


\section{Integrated R1 bound after Fej\'er synchronization}
We consider the KFR dual for $c_\eta^\circ(\gamma,t)=\eta(t)(1-\cos(\gamma t))$. By standard truncation, we may restrict to potentials $\varphi,\psi\le 0$.
\begin{proposition}[R1, Fej\'er-averaged bound]\label{prop:R1}
Let $(\varphi,\psi)$ be feasible with $\varphi,\psi\le 0$ and $\varphi(\gamma)+\psi(t)\le \eta(t)\big(1-\cos(\gamma t)\big)$. For even $f$ with $\widehat f\ge 0$, define $S,g,h,\widehat h$ as in \eqref{eq:defs}. Then
\begin{equation}\label{eq:R1-main}
-\!\int \varphi\,d\mu_\Omega-\!\int \psi\,d\nu_T\ \le\ \int \widehat h\,d\mu_\Omega\ -\ \int h\,d\nu_T\ +\ S\!\int \eta\,d\nu_T.
\end{equation}
\end{proposition}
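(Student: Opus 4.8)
The plan is to pass from the pointwise feasibility constraint to its Fej\'er average in the zero variable, then test the averaged inequality against the probe through its nonnegative Fourier transform, and finally recognise the right‑hand side as the ``explicit‑formula difference'' $\int\widehat h\,d\mu_\Omega-\int h\,d\nu_T$ plus the calibration term $S\int\eta\,d\nu_T$. Throughout, every interchange of integration is licensed by Lemma~\ref{lem:absint} (absolute convergence of all the integrals that appear), and the cosine/Fourier conversions are instances of Lemma~\ref{lem:cosine-pairing}.

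First I would \emph{Fej\'er‑synchronize}. Fixing $t$, integrate the constraint $\varphi(\xi)+\psi(t)\le\eta(t)\bigl(1-\cos(\xi t)\bigr)$ in the auxiliary frequency $\xi$ against $F_\Lambda(\gamma-\xi)\,d\xi$; by Lemma~\ref{lem:two-way-fejer} the right‑hand side collapses to $\eta(t)\bigl(1-\widehat F_\Lambda(t)\cos(\gamma t)\bigr)$, while $\varphi*F_\Lambda\le0$ since $F_\Lambda\ge0$ and $\varphi\le0$. The Fej\'er factor $\widehat F_\Lambda$ produced here is exactly the multiplier that promotes $f$ to $g=f\widehat F_\Lambda$ and $\eta$ to $h=\eta g$; equivalently, and more efficiently, I would fold the Fej\'er kernel into the probe from the start, working with the single positive measure $\beta_0=F_\Lambda*\widehat f$, of total mass $\int\beta_0=(\int F_\Lambda)\,\|\widehat f\|_{L^1}=S$ and with inverse Fourier transform $g$. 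Averaging the feasibility inequality against $\beta_0(\xi)\,d\xi$ in $\xi$ — legitimate because $\widehat f\ge0$ — and evaluating $\int\eta(t)(1-\cos(\xi t))\,\beta_0(\xi)\,d\xi=\eta(t)\bigl(S-g(t)\bigr)$ gives, for every $t$,
\[
\int_{\mathbb R}\varphi\,\beta_0\;+\;S\,\psi(t)\ \le\ S\,\eta(t)-h(t),\qquad\text{with}\quad 0\le S\eta-h=\eta\,(S-g),
\]
the pointwise positivity holding because $g$ is positive‑definite with $g(0)=S$, so $|g|\le S$. This is the integrated (rather than pointwise) form of R1 at the level of the prime variable.

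Next I would pair with the two test measures and calibrate the $\xi=0$ contribution. Feeding the displayed inequality into $\nu_T$ in $t$, and running the zero‑variable side against $\mu_\Omega$ through the cosine‑pairing identity $\widehat h=\widehat\eta*\beta_0$ of \eqref{eq:defs} (which turns $\int\widehat h\,d\mu_\Omega$ into the relevant $\beta_0$‑average of cosine integrals), assembles the left‑hand side $-\int\varphi\,d\mu_\Omega-\int\psi\,d\nu_T$ in the truncated form reduced just before the Proposition, and the right‑hand side $\int\widehat h\,d\mu_\Omega-\int h\,d\nu_T$. The only genuine discrepancy from a clean probability pairing is that $\widehat f$, hence $\beta_0$, carries total mass $S$ rather than $1$; the surplus mass, after convolution with $F_\Lambda$ and $\widehat\eta$, is the $\xi=0$ contribution, where $c_\eta^\circ(0,t)=\eta(t)(1-\cos0)=0$ vanishes identically, so subtracting and restoring it costs exactly $S\int\eta\,d\nu_T$ — the Paley--Wiener/Beurling--Selberg mass that supplies the scale $T$ downstream — and nothing more. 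Collecting terms gives \eqref{eq:R1-main}.

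The obstacle I anticipate is the sign‑and‑mass bookkeeping, not any individual estimate: a naive use of feasibility bounds $\varphi,\psi$ only from above, so one must check that the Fej\'er average, the $\beta_0$‑average and the reduction to truncated potentials all cooperate to leave an \emph{upper} bound for $-\int\varphi\,d\mu_\Omega-\int\psi\,d\nu_T$; and the $\xi=0$ calibration must be shown to peel off precisely $S\int\eta\,d\nu_T$, so that the leftover $\int\widehat h\,d\mu_\Omega-\int h\,d\nu_T$ is the clean, arithmetic‑only object the smoothed explicit formula will later evaluate, carrying no residual boundary ringing from $\eta$. The pointwise inequality $0\le S\eta-h=\eta(S-g)$, which rests only on $g$ being positive‑definite with $g(0)=S$, is what makes this separation go through and is the structural reason the nonnegative Fej\'er factor $\widehat F_\Lambda$ (with $\widehat F_\Lambda(0)=1$) is built into $g$ in the first place.
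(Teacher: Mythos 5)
Your opening moves are sound---integrating the feasibility constraint in $\xi$ against $\beta_0(\xi)\,d\xi$ (total mass $S$), using $\int(1-\cos(\xi t))\,\beta_0(\xi)\,d\xi=S-g(t)$, and noting $0\le S\eta-h=\eta(S-g)$ since $|g|\le g(0)=S$---but the argument breaks exactly where the zero measure $\mu_\Omega$ has to enter. Once you average against the \emph{fixed} window $\beta_0(\xi)\,d\xi$, the zero variable is integrated out: your displayed inequality contains the constant $\int\varphi\,\beta_0$ and no residual $\gamma$-dependence, so there is nothing left to ``run against $\mu_\Omega$''. Pairing the $t$-side with $\nu_T$ yields only
\[
\nu_T(\mathbb R)\int\varphi\,\beta_0\ +\ S\int\psi\,d\nu_T\ \le\ S\!\int\eta\,d\nu_T\ -\ \int h\,d\nu_T,
\]
and neither $-\int\varphi\,d\mu_\Omega$ nor $\int\widehat h\,d\mu_\Omega$ can be extracted from this: $\int\varphi\,\beta_0$ is a pairing with a window at the origin, not with the zeros, and $\int\widehat h\,d\mu_\Omega$ never appears because the cost side was only ever evaluated against $\beta_0\otimes\nu_T$. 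The sentence claiming that ``running the zero-variable side against $\mu_\Omega$ through the cosine-pairing identity \dots assembles'' the two sides of \eqref{eq:R1-main} is precisely the content of the proposition and is asserted rather than proved.

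The structural point you are missing is that the Fej\'er window must be \emph{centered at the zero}: the paper convolves the constraint with $F_\Lambda(\gamma-\xi)\,d\xi$, keeping $\gamma$ live (so the potential becomes $F_\Lambda*\varphi\le 0$, still a function of $\gamma$), integrates in $\gamma$ against the normalized measure $\overline\mu_\Omega=\mu_\Omega/\mu_\Omega(\mathbb R)$, and only then invokes Lemma~\ref{lem:cosine-pairing} with the measure $F_\Lambda*\overline\mu_\Omega$---which is how $\int\widehat h\,d\overline\mu_\Omega$ materializes on the right and how the nonpositive terms $\int(F_\Lambda*\varphi)\,d\overline\mu_\Omega$ and $\int\varphi\,d\overline\mu_\Omega$ can be discarded on the left. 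Your $\beta_0$-average is not ``equivalent'' to this synchronization. Two bookkeeping problems compound the gap: your $\psi$-term carries weight $S$ rather than $1$, and since $\psi\le 0$ the passage from $S\int\psi\,d\nu_T$ to $\int\psi\,d\nu_T$ goes the wrong way for an upper bound when $S<1$; and the claim that the surplus mass at $\xi=0$ ``costs exactly $S\int\eta\,d\nu_T$ and nothing more'' is not backed by a computation. As written, the proposal does not establish \eqref{eq:R1-main}.
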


\begin{proof}
Let $\overline\mu_\Omega:=\mu_\Omega/\mu_\Omega(\mathbb R)$ be the probability normalization. Start from feasibility
$\varphi(\gamma)+\psi(t)\le \eta(t)(1-\cos(\gamma t))$, convolve in $\gamma$ with $F_\Lambda$, and use Lemma~\ref{lem:two-way-fejer}:
\begin{equation}\label{eq:conv-step}
(F_\Lambda*\varphi)(\gamma)+\psi(t)\ \le\ \eta(t)\!\int_{\mathbb R}\!(1-\cos(\xi t))\,F_\Lambda(\gamma-\xi)\,d\xi.
\end{equation}
Integrate \eqref{eq:conv-step} in $\gamma$ against $d\overline\mu_\Omega(\gamma)$, multiply both sides by the probability weight $\widehat f(\xi)/S$, integrate in $\xi$, and then integrate in $t$ against $d\nu_T(t)$; by Fubini,
\begin{align*}
\int (F_\Lambda*\varphi)\,d\overline\mu_\Omega\ +\ \int \psi\,d\nu_T
&\le \int \eta\,d\nu_T\ -\ \frac{1}{S}\!\int\!\Big(\int \eta(t)\cos(\xi t)\,d\nu_T(t)\Big)\,\widehat f(\xi)\,(F_\Lambda*\overline\mu_\Omega)(d\xi).
\end{align*}
Apply Lemma~\ref{lem:cosine-pairing} with $h(t)=\eta(t)f(t)\widehat F_\Lambda(t)$ and the finite measure $(F_\Lambda*\overline\mu_\Omega)$ to identify the last term with $\int (F_\Lambda*\widehat h)\,d\overline\mu_\Omega$. Since $F_\Lambda$ is a probability kernel and $\widehat h\ge 0$, $\int(F_\Lambda*\widehat h)\,d\overline\mu_\Omega\ge \int \widehat h\,d\overline\mu_\Omega$. Also $F_\Lambda*\varphi\le 0$, hence $-\!\int(F_\Lambda*\varphi)\,d\overline\mu_\Omega\ge 0$. Rearranging gives
\begin{equation*}
-\!\int \psi\,d\nu_T\ \le\ \int \eta\,d\nu_T\ -\ \int \widehat h\,d\overline\mu_\Omega.
\end{equation*}
Finally add $-\!\int \varphi\,d\overline\mu_\Omega\ge 0$ and multiply both sides by $\mu_\Omega(\mathbb R)$; noting $\int h\,d\nu_T=\int \eta g\,d\nu_T$, we obtain \eqref{eq:R1-main}.
\end{proof}

\begin{remark}
No pointwise separable envelope is used. The bound \eqref{eq:R1-main} is derived only after Fej\'er averaging and integration; at no point do we compare $-\!\int \varphi$ with $-\!\int(F_\Lambda*\varphi)$.
\end{remark}

\section{Zero-frequency calibration}
Let $m$ be the prime-side main density and write $\nu_T=m+a$ as above. We adjust $f$ by a narrow, nonnegative bump $h_0$ in frequency to achieve
\begin{equation}\label{eq:cal}
S\!\int \eta\,m\ =\ \int h\,m,
\end{equation}
while preserving $\widehat f\ge 0$ and then rescale $f$ so that $S=\|\widehat f\|_{L^1}\asymp T^{-1}$. This is achieved by choosing $\widehat h_0$ supported in $[-\xi_0,\xi_0]$, $\int\widehat h_0=1$, and solving $\mathcal L(c):=S_c\!\int\eta\,m-\int h_c\,m=0$ for small $|c|$, where $f_c=f+c\,h_0$ and $S_c,\ h_c$ are defined as in \eqref{eq:defs}. The map $\mathcal L$ is affine and nonconstant (tune $\xi_0$ if necessary), hence has a small root $c^\ast$ with $\widehat f_{c^\ast}\ge 0$; a scalar rescaling enforces $S_{c^\ast}\asymp T^{-1}$.

\begin{corollary}[Calibrated reduction]\label{cor:calibrated}
With $f$ calibrated as above and $h=\eta g$, the dual value satisfies
\begin{equation}\label{eq:calibrated}
\sup_{\text{feasible }(\varphi,\psi)}\Big\{-\!\int\varphi\,d\mu_\Omega-\!\int\psi\,d\nu_T\Big\}
\ \le\ \int \widehat h\,d\mu_\Omega\ -\ \int h\,d\nu_T\ +\ O\!\big(\|h\|_{L^1}+\|\widehat h\|_{L^1}\big).
\end{equation}
\end{corollary}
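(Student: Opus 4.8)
The plan is to feed the calibrated probe $f$ of Section~3 into the Fej\'er-averaged bound \eqref{eq:R1-main} and then to cancel the exponentially large prime-side mass using the zero-frequency identity \eqref{eq:cal}. First I would pass to the supremum: by the truncation reduction invoked before Proposition~\ref{prop:R1}, the dual value equals the supremum over feasible pairs with $\varphi,\psi\le0$, and \eqref{eq:R1-main} holds for each such pair with the calibrated $f$; hence
\[
\sup_{\text{feasible}}\Big\{-\!\int\varphi\,d\mu_\Omega-\!\int\psi\,d\nu_T\Big\}\ \le\ \int\widehat h\,d\mu_\Omega-\int h\,d\nu_T+S\!\int\eta\,d\nu_T .
\]
By Lemma~\ref{lem:absint} (applied to $\mu_\Omega-n$ and $\nu_T-m$) and the fact that $h=\eta g$ is supported in $[-\Lambda,\Lambda]$, all integrals here converge absolutely, so the rearrangements below are legitimate.

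Next I would split $\nu_T=m\,dt+a\,dt$. The calibration \eqref{eq:cal} is precisely the identity $S\int\eta\,m=\int h\,m$, and it is designed so that the two $e^{T}$-scale contributions in the excess term annihilate against the corresponding pieces of $\int h\,d\nu_T$:
\[
-\!\int h\,d\nu_T+S\!\int\eta\,d\nu_T\ =\ -\!\int h\,m-\!\int h\,a+\!\int h\,m+S\!\int\eta\,a\ =\ \int (S\eta-h)\,a\,dt .
\]
Thus the dual value is bounded by $\int\widehat h\,d\mu_\Omega+\int(S\eta-h)\,a\,dt$, and \eqref{eq:calibrated} reduces to bounding the residual $\int(S\eta-h)\,a\,dt$ by $O(\|h\|_{L^1}+\|\widehat h\|_{L^1})$; the nonnegative slacks already discarded inside the proof of Proposition~\ref{prop:R1} need no further accounting, and the narrow calibration bump $h_0$ enters $h$ with $L^1$- and Fourier-$L^1$-mass $\ll\|h\|_{L^1}+\|\widehat h\|_{L^1}$ by construction.

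The main obstacle is controlling $\int(S\eta-h)\,a\,dt$. The crude route --- $\|S\eta\|_\infty\lesssim S\asymp T^{-1}$, $\|h\|_\infty\le\|\eta\|_\infty\|g\|_\infty\le\|\eta\|_\infty\|\widehat f\|_{L^1}\lesssim S$, and Lemma~\ref{lem:absint} --- gives only $|\int(S\eta-h)a|\lesssim S\,\|a\|_{L^1([0,T])}$, which is of de la Vall\'ee--Poussin size and hence too large; instead one keeps the residual in the \emph{difference} form $\int(S\eta-h)\,d\nu_T-\int(S\eta-h)\,m$ and evaluates the two integrals by a smoothed explicit formula. The $L^1$-controlled boundary ringing of $\eta$ and of $F_\Lambda$, paired against $d\mu_\Omega$ through Lemma~\ref{lem:cosine-pairing}, then contributes exactly $O(\|h\|_{L^1}+\|\widehat h\|_{L^1})$ --- the $\|\widehat h\|_{L^1}$-term carrying the Paley--Wiener mass $\|\widehat\eta\|_{L^1}\asymp T$. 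Carrying out this cross-pairing is the sole non-routine point of the argument; everything else is linear bookkeeping on \eqref{eq:R1-main}.
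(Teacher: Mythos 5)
Your argument is essentially the paper's own proof: split $\nu_T=m+a$, cancel the exponentially large main term via the calibration identity \eqref{eq:cal}, and treat the surviving residual $\int(S\eta-h)\,a=\int\eta(S-g)\,a$ not by the crude $\|a\|_{L^1}$ bound of Lemma~\ref{lem:absint} (which, as you note, is of de la Vall\'ee Poussin size and useless here) but as an explicit-formula difference for the band-limited test $\eta(S-g)$, controlled through the $L^1$-norms $\|h\|_{L^1}+\|\widehat h\|_{L^1}$ exactly as in Proposition~\ref{prop:EF-L1}. Your bookkeeping is in fact slightly cleaner than the paper's --- its displayed identity for $S\!\int\eta\,a$ omits the term $\int h\,a$ that your computation correctly retains inside $\int(S\eta-h)\,a$ --- and the remaining looseness (whether the final bound reads $\int\widehat h\,d\mu_\Omega+O(\cdot)$ or literally carries the $-\!\int h\,d\nu_T$ of \eqref{eq:calibrated}) is shared with, not introduced by, your write-up.
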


\begin{proof}
From \eqref{eq:R1-main} and $\nu_T=m+a$,
\begin{align*}
\text{LHS}\ \le\ \int \widehat h\,d\mu_\Omega\ -\ \int h\,d\nu_T\ +\ S\!\int \eta\,m\ +\ S\!\int \eta\,a.
\end{align*}
By the calibration \eqref{eq:cal}, $S\!\int \eta\,m=\int h\,m$, so
\begin{equation*}
S\!\int \eta\,a\ =\ \Big(\int \eta(S-g)\,d\nu_T\Big)\ -\ \int \eta(S-g)\,m,
\end{equation*}
and the second term cancels by \eqref{eq:cal}. Thus $\int \eta(S-g)\,a$ is itself an explicit-formula difference for the band-limited test $\eta(S-g)$, whence, by the same $L^1$-controlled explicit formula used below, it is $\ll (\|h\|_{L^1}+\|\widehat h\|_{L^1})\log^2 T$, which we write as $O(\|h\|_{L^1}+\|\widehat h\|_{L^1})$ under the final bound.
\end{proof}

\section{Smoothed explicit formula with $L^1$-control}
\begin{proposition}[Explicit formula, $L^1$-controlled]\label{prop:EF-L1}
Let $h\in L^1(\mathbb R)$ be even with $\widehat h\in L^1(\mathbb R)$ and $\Omega=\kappa T$ with fixed $\kappa>0$. Then, unconditionally,
\begin{equation}\label{eq:EF}
\int_{\mathbb R}\widehat h(\gamma)\,d\mu_\Omega(\gamma)\ -\ \int_{\mathbb R} h(t)\,d\nu_T(t)\
=\ M(h;T,\Omega)\ +\ E(h;T,\Omega),
\end{equation}
where $M$ collects the archimedean/main contributions and
\begin{equation}\label{eq:Ebound}
|E(h;T,\Omega)|\ \ll\ \big(\|h\|_{L^1}+\|\widehat h\|_{L^1}\big)\,\log^2 T.
\end{equation}
All implied constants are uniform for fixed $\alpha\in(0,1)$ and $\kappa>0$. The $\log^2 T$ factor is the explicit-formula complexity coming from gamma/zero bookkeeping with smoothing and truncation (cf.\ Titchmarsh, Thm.~14.25/14.29; Iwaniec--Kowalski, Ch.~5).
\end{proposition}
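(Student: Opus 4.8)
I would peel off the $\theta_\Delta$-smoothing, feed the resulting classical sums into the unconditional explicit formula for $\zeta$, and push every smoothing- and truncation-induced discrepancy into $E$. Write $\mu_\Omega=\mu_\Omega^0*\theta_\Delta$ with $\mu_\Omega^0=\sum_{|\gamma_\rho|\le\Omega}\delta_{\gamma_\rho}$ and $\nu_T=\nu_T^0*\theta_\Delta$ with $\nu_T^0=\sum_{p^k\le e^T}\tfrac1k\delta_{\log p^k}$. Since $\theta_\Delta$ is even,
\[
\int\widehat h\,d\mu_\Omega=\sum_{|\gamma_\rho|\le\Omega}(\widehat h*\theta_\Delta)(\gamma_\rho),\qquad
\int h\,d\nu_T=\sum_{p^k\le e^T}\tfrac1k\,(h*\theta_\Delta)(\log p^k),
\]
two finite sums, and Young's inequality gives $\|\widehat h*\theta_\Delta\|_\infty\ll\|\widehat h\|_\infty\ll\|h\|_{L^1}$, $\|h*\theta_\Delta\|_\infty\ll\|h\|_\infty\ll\|\widehat h\|_{L^1}$, together with $\|\widehat h*\theta_\Delta\|_{L^1}\ll\|\widehat h\|_{L^1}$ and $\|h*\theta_\Delta\|_{L^1}\ll\|h\|_{L^1}$; these bounds and Lemma~\ref{lem:absint} are the only quantitative inputs on the probe.

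Next I would apply the Riemann--Weil explicit formula, in the $J$-normalization matched to $m(t)=e^t/t$, to the even function $W:=\widehat h*\theta_\Delta$: the full zero sum $\sum_\rho W(\gamma_\rho)$ equals the polar contributions at $s=0,1$, an archimedean integral $\int W(\tau)\,\omega_\infty(\tau)\,d\tau$ with $\omega_\infty(\tau)\ll\log(2+|\tau|)$ (plus a constant times $\int W$), minus a prime sum whose test function is $\widehat W=h\,\widehat\theta_\Delta$ carrying the weights $\tfrac1k$ at $\log p^k$. I would \emph{define} $M(h;T,\Omega)$ to be the polar terms, the archimedean integral, and the calibrating main-density integrals $\int_{[-\Omega,\Omega]}\widehat h\,n$ and $\int_{[0,T]}h\,m$, so that $E$ is exactly the remainder. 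It then splits into: (i) the zero tail $\sum_{|\gamma_\rho|>\Omega}W(\gamma_\rho)$; (ii) the mismatch between the explicit-formula prime sum and $\int h\,d\nu_T$, namely $\sum_{p^k\le e^T}\tfrac1k[(h*\theta_\Delta)-h\,\widehat\theta_\Delta](\log p^k)$ plus the prime tail $p^k>e^T$; (iii) the fluctuation $\sum_{|\gamma_\rho|\le\Omega}W(\gamma_\rho)-\int_{-\Omega}^{\Omega}W\,n$ of the zero count about its density; and (iv) trivial-zero and local (near $s=0$, near $t=0$) corrections.

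Piece (iv) is $O(\|h\|_{L^1}+\|\widehat h\|_{L^1})$. Piece (i): the unconditional density bound $\#\{\gamma_\rho\in[X,X+1]\}\ll\log X$ and the decay of $W=\widehat h*\theta_\Delta$ (from $\widehat h\in L^1$ and the Schwartz tails of $\theta_\Delta$) make the tail $\ll(\|h\|_{L^1}+\|\widehat h\|_{L^1})\,\Omega^{-1}\log\Omega$; the prime tail in (ii) is likewise negligible since $h*\theta_\Delta$ lives on $|t|\ll\Delta^{-1}\ll T$. For the main part of (ii) I would expand $(h*\theta_\Delta)-h\,\widehat\theta_\Delta$ and partial-summate against the Chebyshev count $\sum_{p^k\le e^u}\tfrac1k\ll e^u/u$, routing the residual prime fluctuation back through the explicit formula itself (rather than through $\|a\|_{L^1}$), for a total cost $\ll(\|h\|_{L^1}+\|\widehat h\|_{L^1})\log^2 T$. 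Piece (iii) is handled by Abel summation in $\gamma$ against the Riemann--von Mangoldt remainder $|N(\Gamma)-\tfrac{\Gamma}{2\pi}\log\tfrac{\Gamma}{2\pi e}|\ll\log\Gamma$, reinforced by the Littlewood-type bound $\int_0^{\Gamma}S(t)\,dt\ll\log\Gamma$ so that the smoothing scale does not enter, again giving $\ll(\|h\|_{L^1}+\|\widehat h\|_{L^1})\log^2 T$. Summing (i)--(iv) yields \eqref{eq:Ebound}; unconditionality and uniformity in $\alpha,\kappa$ are automatic, since only the explicit-formula identity, the unconditional zero count, and De la Vall\'ee Poussin enter.

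The main obstacle, as the bookkeeping above already signals, is keeping $E$ at $\log^2 T$ and not $T^{\alpha}\log T$: estimating the zero-count fluctuation (iii) crudely by $\|W\|_\infty\int_{-\Omega}^{\Omega}|b|$ loses the factor $\|\theta_\Delta'\|_{L^1}\asymp\Delta^{-1}=T^{\alpha}$, and estimating the prime fluctuation (ii) by $\|h\|_\infty\|a\|_{L^1}$ loses super-polynomially, because $\|a\|_{L^1}$ is tiny only \emph{relative} to $\|m\|_{L^1}$. Both must be routed through the cancellation in the explicit formula and the oscillation of $S(t)$ --- i.e.\ integration by parts that moves derivatives onto the slowly varying factors rather than onto $\theta_\Delta$ --- which is precisely the truncated-explicit-formula accounting of Titchmarsh, Thm.~14.25/14.29, and Iwaniec--Kowalski, Ch.~5. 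Reproducing that accounting in the present smoothed, truncated setting is the real work; everything else is routine.
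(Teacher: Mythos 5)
The paper itself gives no proof of Proposition~\ref{prop:EF-L1} --- only a pointer to Titchmarsh and Iwaniec--Kowalski --- so there is no argument of record to compare yours against; your skeleton (unfold the $\theta_\Delta$-smoothing, apply the Riemann--Weil formula to $W=\widehat h*\theta_\Delta$, declare $M$ to be the polar/archimedean/density terms, push the truncation and smoothing mismatches into $E$) is surely the intended route, and you correctly flag that the crude bounds $\|W\|_\infty\int|b|$ and $\|h\|_\infty\|a\|_{L^1}$ lose $T^\alpha$ and $e^{T-c\sqrt T}$ respectively. The difficulty is that the repairs you propose for pieces (i)--(iii) do not work under the stated hypotheses, which give only $h,\widehat h\in L^1$. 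For (i), you invoke decay of $W$ at infinity ``from $\widehat h\in L^1$ and the Schwartz tails of $\theta_\Delta$''; but $\widehat h\in L^1$ gives no pointwise decay at all. If $\widehat h$ is concentrated on an interval of length $\ll\Delta$ near $|\xi|=2\Omega$, then $\|W\|_\infty$ can be of order $\|\widehat h\|_{L^1}\Delta^{-1}$ there, and since a unit interval at height $\Omega$ may unconditionally contain $\gg\log T$ clustered ordinates, the tail $\sum_{|\gamma_\rho|>\Omega}W(\gamma_\rho)$ can reach $\|\widehat h\|_{L^1}T^{\alpha}\log T$, not $\|\widehat h\|_{L^1}\Omega^{-1}\log\Omega$. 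For (iii), the Littlewood bound $\int_0^\Gamma S\ll\log\Gamma$ only pays off after a \emph{second} integration by parts, and with $\widehat h$ merely in $L^1$ both derivatives must land on $\theta_\Delta$, costing $\|W''\|_{L^1}\ll\|\widehat h\|_{L^1}\Delta^{-2}=\|\widehat h\|_{L^1}T^{2\alpha}$; the smoothing scale does enter, contrary to your claim that it does not.

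Piece (ii) is the most serious: since $\widehat\theta_\Delta(t)\approx 1$ only for $|t|\ll\Delta^{-1}=T^{\alpha}$, the ``mismatch'' $\sum_{p^k\le e^T}\tfrac1k\,h(\log p^k)\bigl(1-\widehat\theta_\Delta(\log p^k)\bigr)$ is essentially the full unsmoothed prime sum over $T^{\alpha}\le\log p^k\le T$, not an error term, and ``routing it back through the explicit formula'' regenerates a zero sum of exactly the type you started with --- the argument is circular rather than convergent. (You also gloss over the normalization bookkeeping: the $J$/$\mathrm{li}$ form of the explicit formula carries weights $e^{\beta t}/t$ on the zero side, so the pairing of $\sum\tfrac1k(h*\theta_\Delta)(\log p^k)$ with $\sum_\rho W(\gamma_\rho)$ requires absorbing $e^{t/2}/t$ factors that are nontrivial when $h$ is only $L^1$.) The honest conclusion, which your own ``main obstacle'' paragraph gestures at but does not draw, is that \eqref{eq:Ebound} is not provable for arbitrary even $h$ with $h,\widehat h\in L^1$: one must use the extra structure of the specific probe of \eqref{eq:defs} --- $h$ supported where $|t|\ll T$ via $\widehat F_\Lambda$ and $\eta$, and $\widehat h=\widehat\eta*\beta_0$ smooth at scale $1$ rather than at scale $\Delta$ --- so the proposition needs strengthened hypotheses, and your plan should be reorganized around those rather than around generic $L^1$ control.
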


\begin{proposition}[Paley--Wiener $L^1$ sizes]\label{prop:hnorms}
With the normalized choice $S=\|\widehat f\|_{L^1}\asymp T^{-1}$ and definitions \eqref{eq:defs},
\begin{equation}\label{eq:hnorms}
\|h\|_{L^1}\ \asymp\ T,\qquad \|\widehat h\|_{L^1}\ \ll\ 1.
\end{equation}
\end{proposition}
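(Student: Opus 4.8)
The two estimates in \eqref{eq:hnorms} decouple, and the plan is to treat them by complementary means: the frequency-side bound $\|\widehat h\|_{L^1}\ll 1$ from the \emph{convolution} structure of $\widehat h$, and the physical-side bound $\|h\|_{L^1}\asymp T$ from the \emph{product} structure of $h$ together with the band-limitation carried by $\widehat F_\Lambda$.

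For $\|\widehat h\|_{L^1}\ll1$: by \eqref{eq:defs}, $\widehat h=\widehat\eta*\beta_0=\widehat\eta*F_\Lambda*\widehat f$ is a triple convolution of \emph{nonnegative} $L^1$ functions ($\widehat\eta\ge0$ and $\widehat f\ge0$ by hypothesis, $F_\Lambda\ge0$). For nonnegative $u,v\in L^1$ one has $\|u*v\|_{L^1}=\int(u*v)=\big(\int u\big)\big(\int v\big)=\|u\|_{L^1}\|v\|_{L^1}$ by Tonelli, so
\[
\|\widehat h\|_{L^1}=\|\widehat\eta\|_{L^1}\,\|F_\Lambda\|_{L^1}\,\|\widehat f\|_{L^1}\ \asymp\ T\cdot 1\cdot T^{-1}\ =\ 1,
\]
using $\|\widehat\eta\|_{L^1}\asymp T$, $\int F_\Lambda=1$ and the normalization $S\asymp T^{-1}$. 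This also records $\widehat h\ge0$, which is what Proposition~\ref{prop:R1} and Corollary~\ref{cor:calibrated} consume.

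For $\|h\|_{L^1}\asymp T$: write $h=\eta\,g$ with $g=f\,\widehat F_\Lambda$. Since $\widehat F_\Lambda(t)=(1-|t|/\Lambda)_+$ is supported on $[-\Lambda,\Lambda]$ with $0\le\widehat F_\Lambda\le1$, the function $h$ is supported on $[-\Lambda,\Lambda]$ and $|h(t)|\le\eta(t)\,|f(t)|$ there, with $\|f\|_\infty=f(0)=\int\widehat f\asymp S$ (valid because $\widehat f\ge0$). For the upper bound $\|h\|_{L^1}\ll T$ I would integrate $\eta\,|f|\,\widehat F_\Lambda$ over $[-\Lambda,\Lambda]$, dominating $\eta$ by its plateau value plus an $L^1$ tail via the $L^1$-controlled boundary ringing of the Vaaler majorant, and using the Paley--Wiener profile of the normalized probe to see that the mass $f$ deposits inside the Fej\'er window is of order $T$ and no larger. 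For the matching lower bound I would localize to a fixed window $|t|\le c$, where $\widehat F_\Lambda\asymp1$ and $\eta\asymp1$ (its value on $[0,T]$), so that $\int|h|\gg\int_{|t|\le c}|f|$, and then invoke the same Paley--Wiener lower bound for the calibrated, rescaled $f$ of \S3.

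The nontrivial point is entirely in this last step: one must verify that, after the affine calibration $\mathcal L(c^\ast)=0$ and the scalar rescaling enforcing $S\asymp T^{-1}$, the probe genuinely deposits $L^1$-mass of exact order $T$ inside the Fej\'er band $[-\Lambda,\Lambda]$ — that neither the truncation $\widehat F_\Lambda$ nor oscillation in $f$ collapses $\|h\|_{L^1}$ below the scale $T$ — whereas the upper bound and the frequency-side estimate are comparatively routine. I would isolate this as a single lemma on the chosen probe, proved by an explicit computation with its Fourier profile while tracking the three scales $\Lambda\asymp T^{\alpha}$, $\Omega=\kappa T$, and the bandwidth of $\eta$ simultaneously.
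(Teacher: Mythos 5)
Your treatment of $\|\widehat h\|_{L^1}$ is correct and in fact slightly sharper than the paper's: the paper only invokes Young's inequality for the one-sided bound $\|\widehat h\|_{L^1}\le\|\widehat\eta\|_{L^1}\|\beta_0\|_{L^1}$, whereas your exact factorization of the $L^1$ norm of a convolution of nonnegative functions gives the same conclusion with equality. That half of the proposition is fine.

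The physical-side estimate is where the proposal breaks down, and the problem is not merely that you defer the lower bound to an unproved lemma: the two facts you yourself record --- that $h=\eta f\widehat F_\Lambda$ is supported in $[-\Lambda,\Lambda]$ with $\Lambda\asymp T^{\alpha}$, $\alpha<1$, and that $\|f\|_\infty=f(0)\asymp S\asymp T^{-1}$ because $\widehat f\ge 0$ --- already combine to give
\[
\|h\|_{L^1}\ \le\ \|\eta\|_\infty\,\|f\|_\infty\int_{\mathbb R}\widehat F_\Lambda(t)\,dt\ \ll\ T^{-1}\cdot\Lambda\ \asymp\ T^{\alpha-1}\ =\ o(1),
\]
since $\|\eta\|_\infty=O(1)$ for a standard Beurling--Selberg majorant of an indicator. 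So no probe with the stated normalization can ``deposit $L^1$-mass of exact order $T$ inside the Fej\'er band'': the lemma you propose to isolate at the end is not just nontrivial, it is false, and your own localization to $|t|\le c$ can only produce a lower bound of order $S\asymp T^{-1}$, never $T$. Before anything can be proved here, one of the following must change: the normalization $S\asymp T^{-1}$, the claim $\|h\|_{L^1}\asymp T$, or the definition $g=f\,\widehat F_\Lambda$. For comparison, the paper's own proof asserts $\|g\|_{L^1}\asymp\|f\|_{L^1}\asymp T$ without confronting the support restriction imposed by $\widehat F_\Lambda$ or the sup-norm constraint forced by $\widehat f\ge0$, and in any case it only establishes the upper bound $\|h\|_{L^1}\le\|g\|_{L^1}$, never the lower bound required for $\asymp$. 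Your computation, pushed one step further, shows that these assertions cannot all be reconciled; you should have flagged that contradiction rather than postponing it to a future lemma.
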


\begin{proof}
By Young's inequality and $\widehat f\ge 0$, $\|g\|_{L^1}\asymp \|f\|_{L^1}\asymp T$ and $\|\beta_0\|_{L^1}=\|\widehat f\|_{L^1}=S\asymp T^{-1}$. Thus $\|h\|_{L^1}\le \|g\|_{L^1}\asymp T$ and $\|\widehat h\|_{L^1}\le \|\widehat\eta\|_{L^1}\|\beta_0\|_{L^1}\asymp T\cdot S\ll 1$.
\end{proof}

\section{Main theorem}
\begin{theorem}\label{thm:main}
Let $\eta=\eta_T$ be an even Beurling--Selberg kernel with $\eta\equiv 1$ on $[0,T]$, $\widehat\eta\ge 0$, $L^1$-controlled boundary ringing, and $\|\widehat\eta\|_{L^1}\asymp T$. For the smoothed measures $\nu_T,\mu_\Omega$ defined above and $\Omega=\kappa T$ with fixed $\kappa>0$, the KL-regularized prime--zero transport cost satisfies
\begin{equation}
\mathsf{OT}_\eta(T)\ \ll\ T\,\log^2 T.
\end{equation}
\end{theorem}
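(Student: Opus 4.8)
The plan is to chain together the three ingredients already assembled—Corollary \ref{cor:calibrated}, Proposition \ref{prop:EF-L1}, and Proposition \ref{prop:hnorms}—and check that the archimedean main term $M(h;T,\Omega)$ from the explicit formula does not exceed the target order $T\log^2 T$. Here is the route I would follow.

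\textbf{Step 1 (Reduce the dual value to an explicit-formula difference).} By definition $\mathsf{OT}_\eta(T)$ equals the supremum over feasible potentials $(\varphi,\psi)$ of the dual functional $-\!\int\varphi\,d\mu_\Omega-\!\int\psi\,d\nu_T$; the KFR duality for the normalized cost $c_\eta^\circ(\gamma,t)=\eta(t)(1-\cos(\gamma t))$ identifies this supremum with the (KL-regularized) transport cost, and the truncation to $\varphi,\psi\le 0$ in Proposition \ref{prop:R1} is the standard one. Apply Corollary \ref{cor:calibrated} with the calibrated probe $f$: the dual value is bounded by
\[
\int\widehat h\,d\mu_\Omega-\int h\,d\nu_T\ +\ O\!\big(\|h\|_{L^1}+\|\widehat h\|_{L^1}\big).
\]

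\textbf{Step 2 (Insert the explicit formula and the Paley--Wiener sizes).} Feed the difference $\int\widehat h\,d\mu_\Omega-\int h\,d\nu_T$ into Proposition \ref{prop:EF-L1}: it equals $M(h;T,\Omega)+E(h;T,\Omega)$ with $|E|\ll(\|h\|_{L^1}+\|\widehat h\|_{L^1})\log^2 T$. By Proposition \ref{prop:hnorms}, the calibrated and rescaled $f$ (with $S=\|\widehat f\|_{L^1}\asymp T^{-1}$) gives $\|h\|_{L^1}\asymp T$ and $\|\widehat h\|_{L^1}\ll 1$, so both the error term $E$ and the $O(\|h\|_{L^1}+\|\widehat h\|_{L^1})$ remainder from Corollary \ref{cor:calibrated} are $\ll T\log^2 T$ (the $\|\widehat h\|_{L^1}$ pieces are $\ll\log^2 T$, subsumed). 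It remains to control $M(h;T,\Omega)$.

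\textbf{Step 3 (Bound the main term).} The main term $M$ packages the $s=1$ pole contribution (the $\int h(t)\,m(t)\,dt$-type term, i.e. $\int h(t)e^t/t\,dt$) together with the archimedean $\Gamma$-factor contribution and the main density $n(\gamma)$ on the zero side. The calibration \eqref{eq:cal} was designed precisely so that the leading prime-pole piece $S\!\int\eta\,m$ is matched by $\int h\,m$; after this cancellation what survives in $M$ is a difference of band-limited integrals against slowly varying densities ($m(t)$ on $[0,T]$, $n(\gamma)\asymp\log\gamma$ on $[0,\Omega]$, and the $\Gamma'/\Gamma$ archimedean density $\asymp\log|t|$). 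Each such integral is controlled by $\|\widehat h\|_{L^1}$ or $\|h\|_{L^1}$ times the logarithmic size of the relevant density over a range of length $\asymp T$, hence $M(h;T,\Omega)\ll T\log^2 T$ as well; the Beurling--Selberg boundary ringing of $\eta$ contributes only an $L^1$-controlled error of the same order by hypothesis. Combining Steps 1--3 yields $\mathsf{OT}_\eta(T)\ll T\log^2 T$.

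\textbf{Main obstacle.} The delicate point is Step 3: one must verify that \emph{no} term hidden in $M(h;T,\Omega)$ is of larger order than $T\log^2 T$—in particular that the prime-pole term, which is individually of size $\asymp S\cdot e^T\asymp e^T/T$ before calibration, is genuinely annihilated by the calibration identity \eqref{eq:cal} rather than merely reduced, and that the residual $\int\eta(S-g)\,a$ and $\int\eta(S-g)\,m$ pieces flagged in the proof of Corollary \ref{cor:calibrated} are band-limited tests to which Proposition \ref{prop:EF-L1} genuinely applies with the claimed $L^1$ norms. The bookkeeping of the $\log^2 T$ factor (one $\log T$ from the density size, one from the smoothing/truncation complexity) must be tracked consistently between Propositions \ref{prop:EF-L1} and \ref{prop:hnorms}; I would isolate this as the one computation to do carefully, everything else being assembly.
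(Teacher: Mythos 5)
Your proposal follows exactly the paper's own assembly: Corollary~\ref{cor:calibrated} to reduce the dual value to the explicit-formula difference, Proposition~\ref{prop:EF-L1} to evaluate that difference, and Proposition~\ref{prop:hnorms} to size the resulting error as $\ll T\log^2 T$. The one substantive difference is your Step~3: the paper's proof passes directly from $M(h;T,\Omega)+E(h;T,\Omega)$ to the bound $(\|h\|_{L^1}+\|\widehat h\|_{L^1})\log^2 T$, i.e.\ it silently discards the main term $M$, whereas you correctly insist that $M$ must itself be shown to be $\ll T\log^2 T$ --- and you rightly single out the prime-pole contribution, of size $\asymp S\,e^T\asymp e^T/T$ before calibration, as the term that would destroy the bound unless the calibration identity \eqref{eq:cal} annihilates it exactly. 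So your version is the more complete of the two, and the ``main obstacle'' you flag is a genuine gap in the paper's own argument (nowhere is $M$ defined precisely or estimated) rather than a defect of your proposal; to close it one would need an explicit description of $M$ in Proposition~\ref{prop:EF-L1} and a verification that, after the calibration, every surviving piece is a band-limited integral against a density of size $O(\log T)$ over a range of length $O(T)$, as you sketch.
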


\begin{proof}
By Corollary~\ref{cor:calibrated},
\begin{equation*}
\mathsf{OT}_\eta(T)\ \le\ \int \widehat h\,d\mu_\Omega\ -\ \int h\,d\nu_T\ +\ O\!\big(\|h\|_{L^1}+\|\widehat h\|_{L^1}\big).
\end{equation*}
Apply Proposition~\ref{prop:EF-L1} to the difference and bound the error using \eqref{eq:hnorms}:
\[
\mathsf{OT}_\eta(T)\ \ll\ \big(\|h\|_{L^1}+\|\widehat h\|_{L^1}\big)\log^2T\ \asymp\ T\log^2T.
\]
\end{proof}

\section*{Acknowledgements}
We thank colleagues for helpful discussions.

\end{document}